\newcommand{\lra}{\longrightarrow}
\newcommand{\vep}{\varepsilon}
\newtheorem{lemma}{Lemma}
\newtheorem{cor}{Corollary}
\newtheorem{defn}{Definition}
\newtheorem{prop}{Proposition}
\newcommand{\beqa}{\begin{eqnarray}}
\newcommand{\beq}{\begin{equation}}
\newcommand{\eeqa}{\end{eqnarray}}
\newcommand{\eeq}{\end{equation}}
\newcommand\ip[2]{\langle {#1},{#2}\rangle} 
\newcommand\vv[1]{{\boldsymbol {\it #1}}} 
\newcommand\ii{{\bf i}}
\newcommand\mb{\overline{\boldsymbol m}}
\newcommand\mm{{\boldsymbol m}}
\newcommand\kk{{\boldsymbol k}}
\newcommand\xx{\vv{x}}
\newcommand\yy{\vv{y}}
\newcommand\cd[2]{\nabla_{\!#1}{#2}}
\begin{document}
\title[]{The Newman-Penrose Formalism for Riemannian 3-manifolds}
\author[]{Amir Babak Aazami}
\address{Kavli IPMU (WPI)\hfill\break\indent
The University of Tokyo\hfill\break\indent
Kashiwa, Chiba 277-8583, Japan}
\email{amir.aazami@ipmu.jp}

\thanks{This work was supported by the World Premier International Research Center Initiative (WPI), MEXT, Japan.}

\maketitle
\begin{abstract}
We adapt the Newman-Penrose formalism in general relativity to the setting of three-dimensional Riemannian geometry, and prove the following results.  Given a Riemannian 3-manifold without boundary and a smooth unit vector field $\kk$ with geodesic flow, if an integral curve of $\kk$ is hypersurface-orthogonal at a point, then it is so at every point along that curve.  Furthermore, if $\kk$ is complete, hypersurface-orthogonal, and satisfies $\text{Ric}(\kk,\kk) \geq 0$, then its divergence must be nonnegative.  As an application, we show that if the Riemannian 3-manifold is closed and a unit length $\kk$ with geodesic flow satisfies $\text{Ric}(\kk,\kk) > 0$, then $\kk$ cannot be hypersurface-orthogonal, thus recovering a result in \cite{hp13}.  Turning next to scalar curvature, we derive an evolution equation for the scalar curvature in terms of unit vector fields $\kk$ that satisfy the condition $R(\kk,\cdot,\cdot,\cdot) = 0$.  When the scalar curvature is a nonzero constant, we show that a hypersurface-orthogonal unit vector field $\kk$ satisfies $R(\kk,\cdot,\cdot,\cdot) = 0$ if and only if it is a Killing vector field.
\end{abstract}

\section{Introduction}
The goal of this paper is to examine geometric properties of vector flows\,---\,divergence, shear, hypersurface-othogonality, and the property of being geodesic\,---\,and to understand when these can be had in three-dimensional Riemannian manifolds.  We will be particularly interested in the relationship between the Ricci or scalar curvature of the manifold and the existence of vector flows with one or more of the properties above.  To explore this relationship, we work with a three-dimensional Riemannian version of the Newman-Penrose formalism \cite{newpen62}; ultimately, we have in mind results analogous to the well-known Sachs equations \cite{sachs1961} and the Goldberg-Sachs theorem \cite{gsrepub} from four-dimensional Lorentzian (spacetime) geometry.  We point out that a four-dimensional Riemannian version of the Goldberg-Sachs theorem (not employing the Newman-Penrose formalism) has already been undertaken in \cite{apostolov97}; moreover, in three dimensions, the Riemannian Newman-Penrose formalism used here is essentially identical to the one for stationary spacetimes studied in \cite{perjes} (see also \cite{hall87}).  Indeed, a secondary goal of this paper is to offer up the Newman-Penrose formalism as an avenue of pursuit in the study of vector flows in three-dimensional Riemannian geometry.

\vskip 12pt

The essence of the Newman-Penrose formalism is to express the covariant derivative, Lie bracket, Riemann curvature tensor, and the differential Bianchi identities in terms of coefficients that directly represent the properties of the flow mentioned above\,---\,the so-called \emph{spin coefficients}.  These spin coefficients then appear in first order differential equations, the \emph{generalized Sachs equations,} one of which is the well-known \emph{Raychaudhuri equation} from general relativity \cite{ray1955}.  Once these differential equations have been written down, our results can simply be ``read off\," from them, with very little work; indeed, that these are \emph{first order} differential equations is in fact one of the main virtues of the formalism.  Finally, given how well-known the Newman-Penrose formalism and the Goldberg-Sachs theorem are, this paper is more or less perfectly straightforward, and in fact we adapt the elegant (spacetime) treatment of these topics as they are presented in \cite[Chapter~5]{o1995}, retaining the same notation.  (Proposition \ref{prop:ray}, for example, is an identical analogue of two well-known results from general relativity.)  The main results of this paper are Propositions \ref{prop:ray} and \ref{prop:GS0}, followed, respectively, by Corollaries \ref{cor:hp} and \ref{cor:GS0}.  The author would like to thank Graham Cox for helpful discussions.

\section{Vector flows on Riemannian 3-manifolds}
\label{sec:review}
(This section parallels, but is not identical to, the spacetime treatment in \cite[p.~327-9]{o1995}.)  Let $\kk$ be a smooth unit vector field defined in an open subset of a Riemannian 3-manifold $(M,\ip{\,}{})$ without boundary, so that $\cd{\vv{v}}{\kk} \perp \kk$ for all vectors $\vv{v}$.  We are interested in the flow of such a vector field; in particular, in obtaining curvature conditions on $M$ under which the flow can be any combination of the following: \emph{geodesic, divergence-free, hypersurface-orthogonal,} or \emph{shear-free}.  To formalize the last two of these properties, let $\xx$ and $\yy$ be two smooth vector fields such that $\{\kk,\xx,\yy\}$ is a local orthonormal frame.  At each point $p$ in the domain of this frame, $\{\xx_p,\yy_p\}$ is an orthonormal basis for the orthogonal complement $\kk_p^{\perp}$, a subspace to which $\nabla$ descends as a linear map:
$$
\nabla\colon \kk_p^{\perp} \lra \kk_p^{\perp}\hspace{.2in},\hspace{.2in}\vv{v}_p\ \mapsto\ \cd{\vv{v}_p}{\kk}.
$$
We visualize each $\kk_p^{\perp}$ as a ``screen" showing cross sections of the flow of $\kk$, and the linear map $\nabla$ as approximating the evolution of this flow, in the following sense.  An expansion in normal coordinates $(x^i)$ centered at $p$ shows that for any nearby point $q = \text{exp}_p(x^i(q)\partial_i|_p)$,  
\beqa
\label{eqn:flow}
\kk_q\ \approx\ \Big(\kk^i(p)\ +\ (\nabla_{x(q)^j\,\partial_j|_p}\kk)^i\Big)\,\partial_i\big|_q,
\eeqa
so that the difference in the components of $\kk_p$ and $\kk_q$ is approximated by the linear map $\nabla$ (because $\kk$ has constant length, $\nabla$ is completely determined by its restriction to $\kk_p^{\perp}$).  Now consider a small disk $C$ of radius $\vep$ in the screen $\kk_p^{\perp}$ at $p$, given by $C = \{\vv{v}_p \in \kk_p^{\perp} : \ip{\vv{v}_p}{\vv{v}_p} \leq \vep\}$.  The linear map $\nabla$ sends the basis $\{\xx_p,\yy_p\}$ to $\{\cd{\xx_p}{\kk},\cd{\yy_p}{\kk}\}$, deforming the disk $C$; since the ``infinitesimal generators" $\kk_p$ themselves approximate the flow, \eqref{eqn:flow} allows us to interpret the deformation of $C$ via $\nabla$ as arising from the change in the flow of $\kk$.  It is in this sense that the linear map $\nabla$, whose matrix is
\beqa
\label{eqn:matrix}
\nabla\ =\ 
    \left[
      \begin{array}{cc}
        \ip{\cd{\xx_p}{\kk}}{\xx_p} & \ip{\cd{\yy_p}{\kk}}{\xx_p}\\
        \ip{\cd{\xx_p}{\kk}}{\yy_p} & \ip{\cd{\yy_p}{\kk}}{\yy_p}\\
      \end{array}
    \right],
\eeqa
approximates the evolution of the flow of $\kk$.  By abuse of notation, we'll use $\nabla$ to denote both the covariant differential and its matrix \eqref{eqn:matrix}.  Regarding the latter, note that since $\cd{\kk}{\kk} \perp \kk$,
$$
\text{div}\,\kk|_p\ =\ \ip{\cd{\xx_p}{\kk}}{\xx_p}\, +\, \ip{\cd{\yy_p}{\kk}}{\yy_p}\ =\ \text{tr}\,\nabla.
$$
(To say that $\kk$ is divergence-free is thus to say that $\text{tr}\,\nabla = 0$ at every point.)  Next, a simple application of Frobenius's theorem shows that $\kk$ is \emph{hypersurface-othogonal} at $p$ (that is, $\kk_p^{\perp}$ is integrable) if and only if the off-diagonal elements of \eqref{eqn:matrix} satisfy
\beqa
\label{eqn:rotation}
\ip{\cd{\yy_p}{\kk}}{\xx_p}\ -\ \ip{\cd{\xx_p}{\kk}}{\yy_p}\ :=\ \omega\ =\ 0.
\eeqa
Thus $\kk$ is hypersurface-othogonal if and only if $\kk$ is \emph{irrotational,} since the skew-symmetric part of $\nabla$ corresponds to an infinitesimal rotation of the disk $C$; note also that $\omega$ is invariant up to sign: its square is the determinant of the skew-symmetric part of $\nabla$.  Finally, we will also be interested in the trace-free symmetric part of \eqref{eqn:matrix}, whose components are usually written as a complex quantity, the \emph{complex shear:}
\beqa
\label{eqn:shear}
\sigma\ :=\ \frac{1}{2}\Big(\ip{\cd{\yy_p}{\kk}}{\yy_p} - \ip{\cd{\xx_p}{\kk}}{\xx_p}\Big)\ +\ \frac{\ii}{2}\Big(\ip{\cd{\yy_p}{\kk}}{\xx_p} + \ip{\cd{\xx_p}{\kk}}{\yy_p}\Big).
\eeqa
This quantifies the distortion of the circular disk $C$ into an elliptical region of the same area (the change in the area itself is given by the divergence of $\kk$); note that although $\sigma$ itself is not invariant, its magnitude $|\sigma^2|$ is (see \cite[p.~329-30]{o1995}).  We say that the flow of $\kk$ is \emph{shear-free} if $\sigma = 0$.  Recalling that a vector field $\vv{z}$ is a Killing vector field if and only if the map $\vv{v} \mapsto \cd{\vv{v}}{\vv{z}}$ is a skew-symmetric $(1,1)$-tensor, we immediately see that Killing vector fields on Riemannian 3-manifolds must be shear-free.  In fact, the following is true.  

\begin{lemma}
\label{lemma:Killing}
Let $M$ be a Riemannian 3-manifold and $\kk$ a smooth unit vector field defined in an open subset of  $M$.  The flow of $\kk$ is geodesic, divergence-free, and shear-free if and only if $\kk$ is a Killing vector field.
\end{lemma}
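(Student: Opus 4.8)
The plan is to unpack the Killing condition, which—as recalled just above—says that the $(1,1)$-tensor $S\colon\vv{v}\mapsto\cd{\vv{v}}{\kk}$ is skew-symmetric, and to match the symmetric and skew parts of $S$ against the three flow conditions. First I would use that $\kk$ has unit length: for every $\vv{v}$ we have $\ip{S\vv{v}}{\kk}=\ip{\cd{\vv{v}}{\kk}}{\kk}=\tfrac12\vv{v}\ip{\kk}{\kk}=0$, so $S$ carries the whole tangent space into $\kk^{\perp}$. Hence, in the frame $\{\kk,\xx,\yy\}$, the matrix of $S$ has vanishing top row and is determined by the single column $S\kk=\cd{\kk}{\kk}\in\kk^{\perp}$ together with the $2\times2$ block \eqref{eqn:matrix}, which is precisely the screen map $\nabla$.

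Next I would invoke the elementary observation that a $3\times3$ matrix of this shape is skew-symmetric if and only if (i) the column $S\kk$ vanishes and (ii) the $2\times2$ block \eqref{eqn:matrix} is skew-symmetric; and that a $2\times2$ matrix is skew-symmetric exactly when both its trace and its trace-free symmetric part vanish. Condition (i) is $\cd{\kk}{\kk}=0$, i.e.\ the flow is geodesic; vanishing of the trace of \eqref{eqn:matrix} is $\operatorname{tr}\nabla=\operatorname{div}\kk=0$, i.e.\ divergence-free; and vanishing of the trace-free symmetric part of \eqref{eqn:matrix} is, reading off \eqref{eqn:shear}, exactly $\sigma=0$, i.e.\ shear-free. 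Thus $S$ is skew-symmetric if and only if the flow is geodesic, divergence-free, and shear-free; combined with the Killing characterization this proves both implications simultaneously. All three conditions, like the Killing equation, are pointwise, so no globalization step is needed.

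I do not anticipate a real obstacle: the whole content is the bookkeeping that converts the one three-dimensional statement ``$\nabla\kk$ is skew'' into the three one-dimensional ``screen'' conditions. The only point meriting care is the status of the geodesic hypothesis. The top row of $S$ is forced to vanish by unit length, but the corresponding column is $\cd{\kk}{\kk}$, which a priori need not be zero; skew-symmetry of $S$ demands that it be. So ``geodesic'' is genuinely one of the three pieces and not a spectator—equivalently, a unit Killing field is automatically geodesic, which is why ``geodesic'' can appear among the hypotheses for free.
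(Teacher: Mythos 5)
Your proposal is correct and follows essentially the same route as the paper: both arguments work in the orthonormal frame $\{\kk,\xx,\yy\}$, use unit length to kill the $\kk$-components of $\cd{\vv{v}}{\kk}$, and then identify the geodesic, divergence-free, and shear-free conditions with the vanishing of the $\cd{\kk}{\kk}$ column, the trace of \eqref{eqn:matrix}, and its trace-free symmetric part, respectively, so that skew-symmetry of $\vv{v}\mapsto\cd{\vv{v}}{\kk}$ is exactly the conjunction of the three. Your packaging as a block decomposition of the $3\times3$ matrix, and the remark that a unit Killing field is automatically geodesic, are accurate refinements of the same computation.
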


\begin{proof}
Let $\kk,\xx,\yy$ be a local orthonormal frame.  Then $\text{div}\,\kk = \text{Re}[\sigma] = 0$ yields $\ip{\cd{\xx}{\kk}}{\xx} = \ip{\cd{\yy}{\kk}}{\yy} = 0$, while $\cd{\kk}{\kk} = 0$ gives $\ip{\cd{\kk}{\kk}}{\xx} = \ip{\cd{\kk}{\kk}}{\yy} = 0$.  Together with $\text{Im}[\sigma] = \ip{\cd{\yy}{\kk}}{\xx} + \ip{\cd{\xx}{\kk}}{\yy} = 0$, it follows that for any smooth vector fields $\vv{v},\vv{w}$, 
$$
\ip{\cd{\vv{v}}{\kk}}{\vv{w}}\, +\, \ip{\vv{v}}{\cd{\vv{w}}{\kk}}\ =\ 0,
$$
once we write $\vv{v},\vv{w}$ in terms of $\kk,\xx,\yy$.  This is the Killing condition for $\kk$.  The converse follows similarly.
\end{proof}

Lemma \ref{lemma:Killing} already exemplifies how the properties of flows in which we are interested can impose curvature conditions on the manifold.  For example, Riemannian manifolds (of any dimension) satisfying ${\rm Ric} < 0$ do not admit constant length Killing vector fields (see, e.g., \cite{BN}).  Thus we may say, using our formalism, that in three dimensions such manifolds do not admit unit vector fields whose flow is simultaneously geodesic, divergence-free, and shear-free.  To obtain further results along these lines, we will use a three-dimensional Riemannian version of the Newman-Penrose formalism, to which we now turn.

\section{The Newman-Penrose formalism for Riemannian 3-manifolds}
\label{sec:NP}
Let $\{\kk,\xx,\yy\}$ be as above and define the complex-valued quantities
\beqa
\label{eqn:complex}
\mm\ :=\ \frac{1}{\sqrt{2}}(\xx-\ii \yy)\hspace{.2in},\hspace{.2in}\mb\ :=\ \frac{1}{\sqrt{2}}(\xx+\ii \yy),\nonumber
\eeqa
so that $\ip{\mm}{\mm} = \ip{\mb}{\mb} = \ip{\kk}{\mm} = \ip{\kk}{\mb} = 0$, while $\ip{\kk}{\kk} = \ip{\mm}{\mb} = 1$ (e.g., $\ip{\kk}{\mm} = \frac{1}{\sqrt{2}}(\ip{\kk}{\xx} - \ii\ip{\kk}{\yy})$; similarly with the others).  Henceforth we work with the \emph{complex triad} $\{\kk,\mm,\mb\}$ in place of $\{\kk,\xx,\yy\}$; the former is the (three-dimensional) Riemannian analogue of the complex null tetrad $\{\kk,\vv{l},\mm,\mb\}$ in the original Newman-Penrose formalism (see \cite[p.~332]{o1995}).  One fewer vector simplifies matters considerably, as there are now only five spin coefficients (as compared to the four-dimensional spacetime case, in which there are twelve spin coefficients).

\begin{defn}
\label{def:spin}
The {\rm spin coefficients} of the complex triad $\{\kk,\mm,\mb\}$ are the complex-valued functions
\beqa
\label{eqn:sc}
\kappa &=& -\ip{\cd{\kk}{\kk}}{\mm}\hspace{.2in},\hspace{.2in}\rho\ =\ -\ip{\cd{\mb}{\kk}}{\mm}\hspace{.2in},\hspace{.2in}\sigma\ =\ -\ip{\cd{\mm}{\kk}}{\mm},\nonumber\\
&&\hspace{.45in}\vep\ =\ \ip{\cd{\kk}{\mm}}{\mb}\hspace{.2in},\hspace{.2in}\beta\ =\ \ip{\cd{\mm}{\mm}}{\mb}.\nonumber
\eeqa
\end{defn}
Since $\{\kk,\xx,\yy\}$ is an orthonormal frame, we could just as well have written $\kappa = \ip{\kk}{\cd{\kk}{\mm}}$, where ${\cd{\kk}{\mm}} = \frac{1}{\sqrt{2}}(\cd{\kk}{\xx} - \ii \cd{\kk}{\yy})$; similarly with the others.  The spin coefficients are the objects of interest in the Newman-Penrose formalism.  To begin with, note that the flow of $\kk$ is geodesic, $\cd{\kk}{\kk} = 0$, if and only if $\kappa = 0$, because if $\cd{\kk}{\kk} \perp \mm$, hence to $\xx$ and $\yy$, then we must have $\cd{\kk}{\kk} = c\kk$, which $c$ must be zero because $\cd{\kk}{\kk} \perp \kk$ (by contrast, $c$ need not be zero if $\kk$ were a null vector field in a Lorentzian manifold).  Next, note that $\vep$ is purely imaginary, $\vep + \bar{\vep} = 0$ (in fact $\vep = \ii \ip{\cd{\kk}{\xx}}{\yy}$), and that if we had designated $\xx$ or $\yy$ to be parallel along the flow of $\kk$, then $\vep$ would vanish (conversely, if $\kappa = 0$, then $\vep = 0$ implies that both $\xx$ and $\yy$ must be parallel along the flow of $\kk$).  Finally, it is easy to verify that the spin coefficient $\sigma$ is precisely the complex shear \eqref{eqn:shear}, while $-2\rho = \text{div}\,\kk + \ii\, \omega$.  Thus the spin coefficients $\kappa,\rho,\sigma$ directly represent the geometric properties of the flow discussed in Section \ref{sec:review} above.  We now express the covariant derivatives and the Lie brackets of $\{\kk,\mm,\mb\}$ in terms of the spin coefficients in Definition \ref{def:spin} (cf. \cite[Corollary 5.8.2, p. 334]{o1995} and equations (9a),(9c) in \cite{hall87}).

\begin{lemma}
\label{lemma:cd}
The covariant derivatives of the complex triad $\{\kk,\mm,\mb\}$ are
\beqa
\cd{\kk}{\kk} &=& -\bar{\kappa}\,\mm - \kappa\,\mb,\nonumber\\
\cd{\mm}{\kk} &=& -\bar{\rho}\,\mm - \sigma\,\mb,\nonumber\\
\cd{\kk}{\mm} &=& \kappa\,\kk + \vep\,\mm,\nonumber\\
\cd{\mm}{\mm} &=& \sigma\,\kk + \beta\,\mm,\nonumber\\
\cd{\mm}{\mb} &=& \bar{\rho}\,\kk - \beta\,\mb,\nonumber
\eeqa
while their Lie brackets are
\beqa
[\kk,\mm] &=& \kappa\,\kk + (\vep + \bar{\rho})\,\mm + \sigma\,\mb,\label{eqn:lb0}\\ 
\,[\mm,\mb] &=& (\bar{\rho} - \rho)\,\kk + \bar{\beta}\,\mm - \beta\,\mb.\nonumber
\eeqa
All other covariant derivatives and Lie brackets are obtained by complex conjugating these.
\end{lemma}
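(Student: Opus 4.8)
The plan is to obtain each covariant derivative by expanding it in the triad $\{\kk,\mm,\mb\}$ and reading off the coefficients. The key observation is that, because $\ip{\mm}{\mm} = \ip{\mb}{\mb} = 0$ while $\ip{\mm}{\mb} = 1$ and $\ip{\kk}{\kk} = 1$, pairing $\vv{v} = a\kk + b\mm + c\mb$ successively with $\kk$, $\mb$, $\mm$ recovers $a$, $b$, $c$; that is, every vector satisfies $\vv{v} = \ip{\vv{v}}{\kk}\,\kk + \ip{\vv{v}}{\mb}\,\mm + \ip{\vv{v}}{\mm}\,\mb$, with the roles of $\mm$ and $\mb$ swapped in the last two terms. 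So it suffices to compute, for each of $\cd{\kk}{\kk}$, $\cd{\mm}{\kk}$, $\cd{\kk}{\mm}$, $\cd{\mm}{\mm}$, $\cd{\mm}{\mb}$, its three inner products against $\kk$, $\mm$, $\mb$.

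Three ingredients do all the work: (i) the definitions of $\kappa,\rho,\sigma,\vep,\beta$ in Definition \ref{def:spin}; (ii) metric compatibility, $\vv{u}\ip{\vv{v}}{\vv{w}} = \ip{\cd{\vv{u}}{\vv{v}}}{\vv{w}} + \ip{\vv{v}}{\cd{\vv{u}}{\vv{w}}}$, applied to the constant functions $\ip{\kk}{\kk}$, $\ip{\mm}{\mm}$, $\ip{\mb}{\mb}$, $\ip{\mm}{\mb}$, $\ip{\kk}{\mm}$, $\ip{\kk}{\mb}$, whose derivatives vanish; and (iii) conjugation symmetry: since $\kk,\xx,\yy$ are real and $\ip{\,}{}$ is extended $\mathbb{C}$-bilinearly, $\mb = \overline{\mm}$ forces $\cd{\mb}{\kk} = \overline{\cd{\mm}{\kk}}$, $\cd{\mb}{\mm} = \overline{\cd{\mm}{\mb}}$, and $\ip{\overline{\vv{v}}}{\overline{\vv{w}}} = \overline{\ip{\vv{v}}{\vv{w}}}$. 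In particular $\ip{\cd{\kk}{\kk}}{\mb} = -\bar\kappa$ and $\ip{\cd{\mm}{\kk}}{\mb} = \overline{\ip{\cd{\mb}{\kk}}{\mm}} = -\bar\rho$; this is the one place to be careful, since $\rho$ is defined through $\mb$, not $\mm$, so this inner product carries a bar.

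I would proceed in order. First, $\cd{\kk}{\kk}$ and $\cd{\mm}{\kk}$ come straight from (i) and (iii) together with $\ip{\cd{\kk}{\kk}}{\kk} = \ip{\cd{\mm}{\kk}}{\kk} = 0$ (ingredient (ii) applied to $\ip{\kk}{\kk}$). Feeding these two back into (ii) then yields $\ip{\cd{\kk}{\mm}}{\kk} = -\ip{\mm}{\cd{\kk}{\kk}} = \kappa$, $\ip{\cd{\mm}{\mm}}{\kk} = -\ip{\mm}{\cd{\mm}{\kk}} = \sigma$, $\ip{\cd{\mm}{\mb}}{\kk} = -\ip{\mb}{\cd{\mm}{\kk}} = \bar\rho$, and $\ip{\cd{\mm}{\mb}}{\mm} = -\ip{\mb}{\cd{\mm}{\mm}} = -\beta$; the remaining inner products $\ip{\cd{\kk}{\mm}}{\mm}$, $\ip{\cd{\mm}{\mm}}{\mm}$, $\ip{\cd{\mm}{\mb}}{\mb}$ all vanish by (ii) applied to $\ip{\mm}{\mm}$ and $\ip{\mb}{\mb}$, while $\ip{\cd{\kk}{\mm}}{\mb} = \vep$ and $\ip{\cd{\mm}{\mm}}{\mb} = \beta$ are definitions. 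Assembling via the expansion formula of the first paragraph gives the five stated covariant-derivative identities. Finally the Lie brackets follow from the torsion-free identity $[\vv{u},\vv{v}] = \cd{\vv{u}}{\vv{v}} - \cd{\vv{v}}{\vv{u}}$: $[\kk,\mm] = \cd{\kk}{\mm} - \cd{\mm}{\kk}$ is immediate, and $[\mm,\mb] = \cd{\mm}{\mb} - \cd{\mb}{\mm}$ using $\cd{\mb}{\mm} = \overline{\cd{\mm}{\mb}}$ from (iii); conjugating everything gives the remaining relations.

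I expect no serious obstacle: once the expansion formula and the $\mathbb{C}$-bilinear conjugation conventions are fixed, the argument is pure bookkeeping. The only spots that invite a sign-or-bar error are distinguishing $\ip{\cd{\mm}{\kk}}{\mb} = -\bar\rho$ from $\ip{\cd{\mb}{\kk}}{\mm} = -\rho$, and tracking which coefficient in the expansion multiplies $\mm$ versus $\mb$; keeping ingredient (iii) explicit from the outset forestalls both.
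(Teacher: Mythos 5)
Your proposal is correct and follows essentially the same route as the paper: expand each covariant derivative in the triad via $\vv{v} = \ip{\vv{v}}{\kk}\,\kk + \ip{\vv{v}}{\mb}\,\mm + \ip{\vv{v}}{\mm}\,\mb$, identify the coefficients using the definitions of the spin coefficients together with metric compatibility and conjugation, and then get the Lie brackets from $[\vv{u},\vv{v}] = \cd{\vv{u}}{\vv{v}} - \cd{\vv{v}}{\vv{u}}$. The paper's proof is just a terser version of this same bookkeeping, and your care with $\ip{\cd{\mm}{\kk}}{\mb} = -\bar{\rho}$ versus $\ip{\cd{\mb}{\kk}}{\mm} = -\rho$ is exactly the right place to be careful.
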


\begin{proof}
All the equations are straightforward.  For example, 
$$
\cd{\kk}{\kk}\ =\ \underbrace{\ip{\cd{\kk}{\kk}}{\kk}}_{0}\kk\ +\ \underbrace{\ip{\cd{\kk}{\kk}}{\mb}}_{-\bar{\kappa}}\mm\ +\ \underbrace{\ip{\cd{\kk}{\kk}}{\mm}}_{-\kappa}\mb
$$
(note that the right-hand side is real, as it must be), while the Lie bracket $[\kk,\mm]$ is computed using $[\kk,\mm] = \cd{\kk}{\mm} - \cd{\mm}{\kk}$.
\end{proof}

Next, we consider the Riemann 4-tensor $R$; with respect to $\{\kk,\mm,\mb\}$, the trace on its first and last indices is
$$
\sum g^{ab} R(a,\cdot,\cdot,b)\ =\ R(\kk,\cdot,\cdot,\kk)\ +\ R(\mm,\cdot,\cdot,\mb)\ +\ R(\mb,\cdot,\cdot,\mm),
$$
which leads to the following identities involving the Ricci tensor:
$$
(*)~\left\{
\begin{array}{rcl}
\text{Ric}(\mm,\mm) &=& -R(\kk,\mm,\kk,\mm),\nonumber\\
\text{Ric}(\kk,\kk) &=& -2R(\kk,\mm,\kk,\mb),\nonumber\\
\text{Ric}(\kk,\mm) &=& -R(\kk,\mm,\mm,\mb),\nonumber\\
\text{Ric}(\mm,\mb) &=& \frac{1}{2}\text{Ric}(\kk,\kk) - R(\mb,\mm,\mm,\mb).\nonumber
\end{array}
\right.
$$

(E.g., $\text{Ric}(\kk,\mm) = \frac{1}{\sqrt{2}}\big(\text{Ric}(\kk,\xx) - \ii\,\text{Ric}(\kk,\yy)\big)$; similarly with the others.)  Then, expressing the Riemann 4-tensor as
\beqa
R(\vv{u},\vv{v},\vv{w},\vv{z}) &=& \ip{\cd{\vv{u}}{ \cd{\vv{v}}{\vv{w}}} - \cd{\vv{v}}{ \cd{\vv{u}}{\vv{w}}} - \cd{[\vv{u},\vv{v}]}{\vv{w}}}{\vv{z}}\nonumber\\
&=& \vv{u}\ip{\cd{\vv{v}}{\vv{w}}}{\vv{z}} - \ip{\cd{\vv{v}}{\vv{w}}}{\cd{\vv{u}}{\vv{z}}} - \vv{v}\ip{\cd{\vv{u}}{\vv{w}}}{\vv{z}}\label{eqn:Riem}\\
&& \hspace{1in} +\ \ip{\cd{\vv{u}}{\vv{w}}}{\cd{\vv{v}}{\vv{z}}} - \ip{\cd{[\vv{u},\vv{v}]}{\vv{w}}}{\vv{z}}\nonumber
\eeqa
leads to the following equations, the three-dimensional Riemannian analogues of the \emph{generalized Sachs equations} (cf. \cite{sachs1961}, \cite[Proposition 5.8.9, p. 339]{o1995}, and equations (12a)-(12e) in \cite{hall87}).

\begin{lemma}
\label{lemma:sachs1}
The complex triad $\{\kk,\mm,\mb\}$ satisfies
\beqa
\label{eqn:Sachs1}
\kk[\rho] - \mb[\kappa] &=& |\kappa|^2 + |\sigma|^2 + \rho^2 + \kappa\bar{\beta} + \frac{1}{2} {\rm Ric}(\kk,\kk),\label{eqn:S1}\\
\kk[\sigma] - \mm[\kappa] &=& \kappa^2 + 2\sigma\vep + \sigma(\rho + \bar{\rho}) - \kappa \beta +{\rm Ric}(\mm,\mm),\phantom{\frac{1}{2}}\label{eqn:S2}\\
\mm[\rho] - \mb[\sigma] &=& 2 \sigma\bar{\beta} + (\bar{\rho}-\rho)\kappa + {\rm Ric}(\kk,\mm),\phantom{\frac{1}{2}}\label{eqn:S3}\\
\kk[\beta] - \mm[\vep] &=& \sigma(\bar{\kappa} - \bar{\beta}) + \kappa (\vep - \bar{\rho}) + \beta(\vep + \bar{\rho}) - {\rm Ric}(\kk,\mm),\phantom{\frac{1}{2}}\nonumber\\
\mm[\bar{\beta}] + \mb[\beta] &=& |\sigma|^2 - |\rho|^2 -2|\beta|^2 + (\rho - \bar{\rho})\vep - {\rm Ric}(\mm,\mb) + \frac{1}{2} {\rm Ric}(\kk,\kk).\nonumber
\eeqa
\end{lemma}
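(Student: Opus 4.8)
The plan is to derive these five equations directly from the curvature formula \eqref{eqn:Riem}, feeding in the covariant-derivative and Lie-bracket identities of Lemma \ref{lemma:cd}, and then converting the resulting curvature components into Ricci terms via the identities $(*)$. Concretely, each equation in the statement corresponds to one choice of the quadruple $(\vv{u},\vv{v},\vv{w},\vv{z})$ from the triad $\{\kk,\mm,\mb\}$: \eqref{eqn:S1} should come from $R(\kk,\mb,\kk,\mm)$ (which by $(*)$ is $-\tfrac12\,{\rm Ric}(\kk,\kk)$ after using the symmetry $R(\kk,\mb,\kk,\mm)=R(\kk,\mm,\kk,\mb)$ suitably conjugated), \eqref{eqn:S2} from $R(\kk,\mm,\kk,\mm)=-{\rm Ric}(\mm,\mm)$, \eqref{eqn:S3} from $R(\mm,\mb,\kk,\mm)$ related to $-{\rm Ric}(\kk,\mm)$, and the last two from $R(\kk,\mm,\mm,\mb)$ and $R(\mb,\mm,\mm,\mb)$ respectively. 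So the first step is simply to tabulate which curvature component yields which Sachs equation.

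The second step is the mechanical expansion. For a fixed quadruple, I would write out the five terms on the right of \eqref{eqn:Riem}: the two ``derivative of an inner product'' terms $\vv{u}\ip{\cd{\vv{v}}{\vv{w}}}{\vv{z}}$ and $\vv{v}\ip{\cd{\vv{u}}{\vv{w}}}{\vv{z}}$ produce the directional-derivative-of-a-spin-coefficient terms on the left-hand sides (e.g. $\kk[\rho]$, $\mb[\kappa]$), after recognizing the inner products as (conjugates of) spin coefficients from Definition \ref{def:spin}; the two ``inner product of covariant derivatives'' terms $\ip{\cd{\vv{v}}{\vv{w}}}{\cd{\vv{u}}{\vv{z}}}$ and $\ip{\cd{\vv{u}}{\vv{w}}}{\cd{\vv{v}}{\vv{z}}}$ become quadratic expressions in spin coefficients once one substitutes the Lemma \ref{lemma:cd} expansions and uses the orthonormality relations $\ip{\mm}{\mb}=\ip{\kk}{\kk}=1$, $\ip{\mm}{\mm}=\ip{\kk}{\mm}=0$; and the connection term $\ip{\cd{[\vv{u},\vv{v}]}{\vv{w}}}{\vv{z}}$ is handled by inserting the bracket formula \eqref{eqn:lb0} (or its conjugate, or $[\mm,\mb]$) and again expanding via Lemma \ref{lemma:cd}. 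Collecting real and imaginary parts, and using $\vep+\bar\vep=0$, should reproduce each displayed equation.

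The third step is bookkeeping: one must be careful that the quantity actually computed has the symmetries of the Riemann tensor built in (the formula \eqref{eqn:Riem} is antisymmetric in $\vv{u},\vv{v}$ but one should double-check the pair symmetry and first Bianchi identity are respected, since otherwise a sign or a spurious term creeps in), and one must track complex conjugates scrupulously, since e.g. $\ip{\cd{\mb}{\kk}}{\mm}=-\rho$ while $\ip{\cd{\mm}{\kk}}{\mb}=-\bar\rho$. A useful consistency check is that the right-hand sides must have the correct reality type: \eqref{eqn:S1} and the last equation should be manifestly expressible so that $\kk[\rho]-\mb[\kappa]$ need not be real but the combination matches, whereas the last equation's left side $\mm[\bar\beta]+\mb[\beta]$ is real and so its right side must be too — verifying this catches most errors.

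The main obstacle is not conceptual but combinatorial: the quadratic-in-spin-coefficients terms involve sums of products that partially cancel, and getting every coefficient (the $2\sigma\vep$, the $\kappa\bar\beta$, the $(\rho-\bar\rho)\vep$, etc.) exactly right requires disciplined expansion of four nested applications of Lemma \ref{lemma:cd} per equation, with no dropped conjugates. I expect the cleanest route is to compute, once and for all, the auxiliary inner products $\ip{\cd{\vv{a}}{\vv{b}}}{\vv{c}}$ for all triples from $\{\kk,\mm,\mb\}$ (these are read off Lemma \ref{lemma:cd} and its conjugates), and then substitute into \eqref{eqn:Riem} purely symbolically — this reduces the task to algebra in the five spin coefficients and their conjugates plus the curvature identities $(*)$, exactly mirroring \cite[Proposition 5.8.9]{o1995}.
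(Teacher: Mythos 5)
Your proposal is correct and matches the paper's own proof essentially exactly: the paper likewise obtains each equation by expanding the corresponding curvature component via \eqref{eqn:Riem} and Lemma \ref{lemma:cd} and then substituting the Ricci identities $(*)$, using $R(\kk,\mb,\kk,\mm)$, $R(\kk,\mm,\kk,\mm)$, $R(\mb,\mm,\kk,\mm)$ (your $R(\mm,\mb,\kk,\mm)$ up to antisymmetry), $R(\kk,\mm,\mm,\mb)$, and $R(\mb,\mm,\mm,\mb)$ for the five equations respectively. No further comment is needed.
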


\begin{proof}
The proof is straightforward.  The expression for $\kk[\rho] - \mb[\kappa]$ is obtained by using \eqref{eqn:Riem} and Lemma \ref{lemma:cd} to re-express $R(\kk,\mb,\kk,\mm)$.  Likewise, $\kk[\sigma] - \mm[\kappa]$ is obtained by considering $R(\kk,\mm,\kk,\mm)$,  $\mm[\rho] - \mb[\sigma]$ by considering $R(\mb,\mm,\kk,\mm)$, $\kk[\beta] - \mm[\vep]$ by considering $R(\kk,\mm,\mm,\mb)$, and $\mm[\bar{\beta}] + \mb[\beta]$ by considering $R(\mb,\mm,\mm,\mb)$.
\end{proof}

Two more equations result from the differential Bianchi identities (cf. \cite[Lemma 5.9.1, p. 341]{o1995} and equations (15a)-(15b) in \cite{hall87}).
\begin{lemma}
\label{lemma:bid}
The complex triad $\{\kk,\mm,\mb\}$ satisfies
\beqa
&&\hspace{-.4in}\kk[{\rm Ric}(\kk,\mm)]\, -\, \frac{1}{2}\mm[{\rm Ric}(\kk,\kk)]\ +\ \mb[{\rm Ric}(\mm,\mm)]\ =\ \label{eqn:bid}\\
&&\hspace{-.1in}\kappa\,{\rm Ric}(\kk,\kk)\ +\ \big(\vep + 2\rho + \bar{\rho}\big){\rm Ric}(\kk,\mm)\ +\ \sigma\,{\rm Ric}(\kk,\mb)\phantom{\frac{1}{2}}\nonumber\\
&&\hspace{1.2in} -\ \big(\bar{\kappa} + 2\bar{\beta}\big){\rm Ric}(\mm,\mm)\, -\, \kappa\,{\rm Ric}(\mm,\mb)\phantom{\frac{1}{2}}\nonumber\\
\text{and}&&\nonumber\\
&&\hspace{-.4in}\mm[{\rm Ric}(\kk,\mb)]\ +\ \mb[{\rm Ric}(\kk,\mm)]\, -\, \kk[{\rm Ric}(\mm,\mb) - \frac{1}{2}{\rm Ric}(\kk,\kk)]\ =\ \label{eqn:bid2}\\
&&\hspace{-.1in}(\rho+\bar{\rho})\big({\rm Ric}(\kk,\kk)-{\rm Ric}(\mm,\mb)\big)\, -\, \bar{\sigma}{\rm Ric}(\mm,\mm)\, -\, \sigma{\rm Ric}(\mb,\mb)\phantom{\frac{1}{2}}\nonumber\\
&&\hspace{1.2in} -\ \big(2\bar{\kappa} + \bar{\beta}){\rm Ric}(\kk,\mm)\, -\, \big(2\kappa + \beta){\rm Ric}(\kk,\mb).\phantom{\frac{1}{2}}\nonumber
\eeqa
\end{lemma}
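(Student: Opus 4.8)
The plan is to derive both equations from the twice-contracted second Bianchi identity. In three dimensions the Riemann tensor is algebraically determined by the Ricci tensor, so the full differential Bianchi identity reduces, as a statement about $\nabla\,{\rm Ric}$ and the scalar curvature $S$, to its twice-contracted form $\text{div}\,{\rm Ric} = \tfrac12\,dS$, that is, $(\text{div}\,{\rm Ric})(\vv{v}) = \tfrac12\,\vv{v}[S]$ for every vector field $\vv{v}$. Equations \eqref{eqn:bid} and \eqref{eqn:bid2} should then emerge as the $\mm$-component and the $\kk$-component of this identity, expanded in the complex triad, just as Lemma \ref{lemma:sachs1} emerged from the curvature formula \eqref{eqn:Riem}.

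First I would rewrite the divergence in the triad. Since the inverse metric decomposes as $g^{ab} = \kk^a\kk^b + \mm^a\mb^b + \mb^a\mm^b$, for a symmetric $2$-tensor $T$,
\[
(\text{div}\,T)(\vv{v}) \;=\; (\cd{\kk}{T})(\kk,\vv{v}) + (\cd{\mm}{T})(\mb,\vv{v}) + (\cd{\mb}{T})(\mm,\vv{v}),
\]
and each term expands via $(\cd{\vv{u}}{T})(\vv{w},\vv{v}) = \vv{u}[T(\vv{w},\vv{v})] - T(\cd{\vv{u}}{\vv{w}},\vv{v}) - T(\vv{w},\cd{\vv{u}}{\vv{v}})$. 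Taking $T = {\rm Ric}$, $\vv{v} = \mm$, and substituting the covariant derivatives $\cd{\kk}{\kk}$, $\cd{\kk}{\mm}$, $\cd{\mm}{\kk}$, $\cd{\mm}{\mm}$, $\cd{\mm}{\mb}$ from Lemma \ref{lemma:cd} together with the conjugates $\cd{\mb}{\kk}$ and $\cd{\mb}{\mm}$, the left side becomes $\kk[{\rm Ric}(\kk,\mm)] + \mm[{\rm Ric}(\mm,\mb)] + \mb[{\rm Ric}(\mm,\mm)]$ plus a sum of (spin coefficient)$\times$(Ricci component) terms, all equal to $\tfrac12\,\mm[S]$.

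It remains to eliminate $S$. Tracing ${\rm Ric}$ over the triad gives $S = {\rm Ric}(\kk,\kk) + 2\,{\rm Ric}(\mm,\mb)$, hence $\tfrac12\,\mm[S] = \tfrac12\,\mm[{\rm Ric}(\kk,\kk)] + \mm[{\rm Ric}(\mm,\mb)]$; the term $\mm[{\rm Ric}(\mm,\mb)]$ cancels against the matching derivative on the left, leaving precisely the left-hand side of \eqref{eqn:bid}, and gathering the remaining terms — using the symmetry of ${\rm Ric}$ and the conjugation relations ${\rm Ric}(\mb,\,\cdot\,) = \overline{{\rm Ric}(\mm,\,\cdot\,)}$ — reproduces its right-hand side. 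Running the identical computation with $\vv{v} = \kk$ and again substituting $S = {\rm Ric}(\kk,\kk) + 2\,{\rm Ric}(\mm,\mb)$ yields \eqref{eqn:bid2}; the choice $\vv{v} = \mb$ merely returns the complex conjugate of \eqref{eqn:bid}, consistent with there being exactly two independent equations.

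As with Lemma \ref{lemma:sachs1} the computation is mechanical, and the only point that demands attention — and the place where sign errors are most likely — is the bookkeeping of conjugates: keeping straight which covariant derivatives in Lemma \ref{lemma:cd} are primitive and which arise by conjugation, and which Ricci components are conjugate pairs (${\rm Ric}(\kk,\mm)$ with ${\rm Ric}(\kk,\mb)$, and ${\rm Ric}(\mm,\mm)$ with ${\rm Ric}(\mb,\mb)$). A convenient sanity check on the final form is that \eqref{eqn:bid} must be genuinely complex whereas \eqref{eqn:bid2}, being the $\vv{v}=\kk$ component, must be real.
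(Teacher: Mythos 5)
Your proof is correct, but it follows a genuinely different route from the paper's. The paper starts from the \emph{uncontracted} second Bianchi identity evaluated on specific quadruples of triad vectors, e.g.\ $(\cd{\kk}{R})(\kk,\mm,\mm,\mb) + (\cd{\mm}{R})(\kk,\mm,\mb,\kk) + (\cd{\mb}{R})(\kk,\mm,\kk,\mm) = 0$, expands each term via Lemma \ref{lemma:cd}, and then converts every Riemann component to a Ricci component using the dictionary $(*)$. You instead start from the twice-contracted identity $\mathrm{div}\,\mathrm{Ric} = \tfrac12\,dS$, expand its $\mm$- and $\kk$-components using $g^{ab} = \kk^a\kk^b + \mm^a\mb^b + \mb^a\mm^b$ and Lemma \ref{lemma:cd}, and eliminate $S$ via $S = \mathrm{Ric}(\kk,\kk) + 2\,\mathrm{Ric}(\mm,\mb)$. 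I checked that this computation does land exactly on \eqref{eqn:bid} (for $\vv{v}=\mm$) and \eqref{eqn:bid2} (for $\vv{v}=\kk$), so the argument is sound; your version is the more economical one, since the full Riemann tensor never has to be handled. Two small remarks. First, your opening assertion that in three dimensions the full differential Bianchi identity \emph{reduces to} its twice-contracted form is true (both have exactly three real independent components, and the contraction map is injective on tensors with the Bianchi symmetries), but it is not actually needed for the lemma: the contracted identity holds in every dimension and already implies both displayed equations; the equivalence would only be needed to justify the paper's parenthetical remark that \eqref{eqn:bid} and \eqref{eqn:bid2} exhaust the nontrivial Bianchi identities. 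Second, like the paper's own proof you leave the final bookkeeping of spin-coefficient terms as a mechanical verification; that is acceptable here, but the cancellations (e.g.\ of $\beta\,\mathrm{Ric}(\mm,\mb)$ inside $(\cd{\mm}{\mathrm{Ric}})(\mb,\mm)$) are where the signs are easiest to lose, so they are worth writing out once.
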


\begin{proof}
For \eqref{eqn:bid}, begin with the following differential Bianchi identity:
\beqa
\label{eqn:bianchi}
(\cd{\kk}{R})(\kk,\mm,\mm,\mb) + (\cd{\mm}{R})(\kk,\mm,\mb,\kk) + (\cd{\mb}{R})(\kk,\mm,\kk,\mm)\ =\ 0.
\eeqa
Expanding the first term, we obtain
\beqa
&&\hspace{-.1in}(\cd{\kk}{R})(\kk,\mm,\mm,\mb)\ =\ \kk[R(\kk,\mm,\mm,\mb)]\,-\, R(\cd{\kk}{\kk},\mm,\mm,\mb)\nonumber\\
&&\hspace{.1in}\,-\, R(\kk,\cd{\kk}{\mm},\mm,\mb)\,-\, R(\kk,\mm,\cd{\kk}{\mm},\mb)\,-\, R(\kk,\mm,\mm,\cd{\kk}{\mb}),\nonumber
\eeqa
whose terms on the right-hand side further simplify, via Lemma \ref{lemma:cd}:
\beqa
\kk(R(\kk,\mm,\mm,\mb)) &=& -\kk[{\rm Ric}(\kk,\mm)],\phantom{\underbrace{R}_{0}}\nonumber\\
R(\cd{\kk}{\kk},\mm,\mm,\mb) &=& -\bar{\kappa}\,\underbrace{R(\mm,\mm,\mm,\mb)}_{0}\,-\,\kappa\!\!\!\underbrace{R(\mb,\mm,\mm,\mb)}_{-\text{Ric}(\mm,\mb) + \frac{1}{2}\text{Ric}(\kk,\kk)},\nonumber\\ 
R(\kk,\cd{\kk}{\mm},\mm,\mb) &=& \kappa\,\underbrace{R(\kk,\kk,\mm,\mb)}_{0}\ +\ \vep\underbrace{R(\kk,\mm,\mm,\mb)}_{-\text{Ric}(\kk,\mm)},\nonumber\\
R(\kk,\mm,\cd{\kk}{\mm},\mb) &=& \kappa\underbrace{R(\kk,\mm,\kk,\mb)}_{-\frac{1}{2}\text{Ric}(\kk,\kk)}\ +\ \vep\underbrace{R(\kk,\mm,\mm,\mb)}_{-\text{Ric}(\kk,\mm)},\nonumber\\
R(\kk,\mm,\mm,\cd{\kk}{\mb}) &=& \bar{\kappa}\underbrace{R(\kk,\mm,\mm,\kk)}_{\text{Ric}(\mm,\mm)}\ +\ \bar{\vep}\underbrace{R(\kk,\mm,\mm,\mb)}_{-\text{Ric}(\kk,\mm)}.\nonumber
\eeqa
Thus the term $(\cd{\kk}{R})(\kk,\mm,\mm,\mb)$ is
$$
-\kk[{\rm Ric}(\kk,\mm)] + \kappa\,\text{Ric}(\kk,\kk) + \vep\,\text{Ric}(\kk,\mm) - \kappa\,\text{Ric}(\mm,\mb) - \bar{\kappa}\,\text{Ric}(\mm,\mm),
$$
where we have used the fact that $\vep + \bar{\vep} = 0$.  The remaining two terms in \eqref{eqn:bianchi} are simplified via the same (laborious) process, to yield \eqref{eqn:bid}.  Repeating this analysis on the differential Bianchi identity
$$
(\cd{\kk}{R})(\mb,\mm,\mm,\mb) + (\cd{\mm}{R})(\mb,\mm,\mb,\kk) + (\cd{\mb}{R})(\mb,\mm,\kk,\mm)\ =\ 0
$$
yields \eqref{eqn:bid2}.  (It is straightforward to verify that \eqref{eqn:bid} and \eqref{eqn:bid2} are the only nontrivial differential Bianchi identities.)
\end{proof}

Lemmas \ref{lemma:sachs1} and \ref{lemma:bid} exhibit one of the virtues of the Newman-Penrose formalism: the geometric quantities of interest pertaining to the flow\,---\,$\kappa,\rho,\sigma$\,---\,appear in \emph{first order} differential equations, in particular \eqref{eqn:S1}, \eqref{eqn:S2}, and \eqref{eqn:S3}.  We close this section with a lemma regarding rotations of the complex vectors $\mm,\mb$, analogous to \cite[Remark 5.8.4, p. 336]{o1995}.

\begin{lemma}
\label{lemma:boost}
Let $\{\kk,\mm,\mb\}$ be a complex triad.  There exists a smooth real function $\vartheta$ such that the modified complex triad
$$
\kk\hspace{.1in},\hspace{.1in}\mm_1\ :=\ e^{\ii\vartheta}\mm\hspace{.1in},\hspace{.1in}\mb_1\ :=\ e^{-\ii\vartheta}\mb
$$
has spin coefficients $\kappa_1 = e^{\ii\vartheta}\kappa, \sigma_1 = e^{2\ii\vartheta}\sigma, \rho_1 = \rho$, and $\vep_1 = 0$.
\end{lemma}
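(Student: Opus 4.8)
The plan is to realize the modified triad as a $\vartheta$-dependent rotation of the pair $\mm,\mb$ in each screen, to compute directly how the five spin coefficients transform, and then to choose $\vartheta$ so as to annihilate $\vep_1$. First I would put $\mm_1 := e^{\ii\vartheta}\mm$ and $\mb_1 := e^{-\ii\vartheta}\mb$ for an as-yet-unspecified smooth real function $\vartheta$, and verify that $\{\kk,\mm_1,\mb_1\}$ is again a complex triad: since the pairing $\ip{\,}{}$ is the $\mathbb{C}$-bilinear (not Hermitian) extension of the metric, the phases recombine as $e^{\ii\vartheta}e^{-\ii\vartheta}=1$ in $\ip{\mm_1}{\mb_1}$ and merely multiply the already-vanishing brackets $\ip{\mm}{\mm}$, $\ip{\kk}{\mm}$, etc., so the defining relations of Section \ref{sec:NP} all persist.

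Next I would compute the new spin coefficients straight from Definition \ref{def:spin}, using that $\nabla$ is $C^\infty$-linear in its subscript slot and obeys the Leibniz rule $\cd{\kk}{(e^{\ii\vartheta}\mm)} = \ii\,\kk[\vartheta]\,e^{\ii\vartheta}\mm + e^{\ii\vartheta}\,\cd{\kk}{\mm}$. For $\kappa$, $\sigma$, and $\rho$ the exponential factors simply pull out of both arguments of $\ip{\,}{}$, giving $\kappa_1 = e^{\ii\vartheta}\kappa$, $\sigma_1 = e^{2\ii\vartheta}\sigma$, and $\rho_1 = e^{-\ii\vartheta}e^{\ii\vartheta}\rho = \rho$ with no condition on $\vartheta$, exactly as claimed. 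Only $\vep$ inherits the extra Leibniz term:
\[
\vep_1 \;=\; \ip{\cd{\kk}{\mm_1}}{\mb_1} \;=\; \ii\,\kk[\vartheta]\,\ip{\mm}{\mb} \,+\, \ip{\cd{\kk}{\mm}}{\mb} \;=\; \vep \,+\, \ii\,\kk[\vartheta].
\]
Since $\vep$ is purely imaginary\,---\,in fact $\vep = \ii\,\ip{\cd{\kk}{\xx}}{\yy}$, as observed just after Definition \ref{def:spin}\,---\,I would write $\vep = \ii a$ with $a := \ip{\cd{\kk}{\xx}}{\yy}$ a smooth real function, so that $\vep_1 = \ii\big(a + \kk[\vartheta]\big)$ and $\vep_1 = 0$ exactly when $\kk[\vartheta] = -a$.

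It then remains only to solve the linear first-order equation $\kk[\vartheta] = -a$ for a smooth real $\vartheta$. This is an ordinary differential equation along the integral curves of $\kk$: rectifying $\kk$ by the flow-box theorem turns it into $\partial_{x^1}\vartheta = -a$ in suitable coordinates, and one obtains a smooth solution on a (possibly smaller) neighborhood by prescribing $\vartheta$ freely on a hypersurface transverse to $\kk$ and integrating along the flow. I do not expect a genuine obstacle anywhere in this argument: the transformation formulas in the second step are pure bookkeeping, and the only point that deserves a word of care is the solvability of $\kk[\vartheta] = -a$, which is why the normalization is carried out locally\,---\,along a single integral curve, which is all the later applications need, it is merely an ODE with smooth right-hand side.
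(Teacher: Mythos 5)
Your proposal is correct and follows essentially the same route as the paper: compute the transformed spin coefficients directly from Definition \ref{def:spin}, observe that only $\vep$ picks up the Leibniz term $\ii\,\kk[\vartheta]$, and then choose $\vartheta$ solving $\kk[\vartheta] = -\ip{\cd{\kk}{\xx}}{\yy}$. The only difference is that you spell out the solvability of this transport equation via the flow-box theorem, a point the paper leaves implicit.
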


\begin{proof}
By definition, 
$$
\kappa_1\ =\ -\ip{\cd{\kk}{\kk}}{\mm_1}\ =\ e^{\ii\vartheta}\kappa;
$$
similarly, $\sigma_1 = e^{2\ii\vartheta}\sigma$, and $\rho_1 = \rho$.  Finally,
\beqa
\vep_1 &=& \ip{\cd{\kk}{\mm_1}}{\mb_1}\nonumber\\
&=& e^{-\ii \vartheta}\ip{\cd{\kk}{(e^{\ii\vartheta}\mm)}}{\mb}\nonumber\\
&=& \vep + e^{-\ii \vartheta}\kk[e^{\ii \vartheta}]\nonumber\\
&=& \vep + \ii\kk[\vartheta].\nonumber
\eeqa
Recalling that $\vep = \ii \ip{\cd{\kk}{\xx}}{\yy}$, we can choose any smooth real function $\vartheta$ satisfying $\kk[\vartheta] = -\ip{\cd{\kk}{\xx}}{\yy}$, in which case $\vep_1 = 0$.
\end{proof}

\section{The Newman-Penrose formalism and Ricci curvature}
\label{sec:sectional}
Now to the fruits of our labor.  First, a result for arbitrary Riemannian 3-manifolds.

\begin{prop}
\label{prop:ray}
Let $\kk$ be a smooth unit vector field with geodesic flow defined in an open subset of a Riemannian 3-manifold.  Let $p$ be a point in the domain of $\kk$ and $\gamma$ the geodesic integral curve of $\kk$ through $p$.  Then $\kk$ is hypersurface-orthogonal at $p$ if and only if it is hypersurface-orthogonal at every point along $\gamma$.
  Moreover, if $\gamma$ is complete, ${\rm Ric}(\kk,\kk) \circ \gamma \geq 0$, and $\kk$ is hypersurface-orthogonal along $\gamma$, then $({\rm div}\,\kk) \circ \gamma \geq 0$.
\end{prop}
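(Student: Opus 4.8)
The plan is to read everything off the first generalized Sachs equation \eqref{eqn:S1}. Since the flow of $\kk$ is geodesic we have $\kappa = 0$ (as noted just after Definition \ref{def:spin}), so \eqref{eqn:S1} reduces to
$$
\kk[\rho]\ =\ |\sigma|^2 + \rho^2 + \tfrac{1}{2}\,{\rm Ric}(\kk,\kk).
$$
Recall from Section \ref{sec:NP} that $-2\rho = {\rm div}\,\kk + \ii\,\omega$; writing $\theta := {\rm div}\,\kk$ we thus have $\rho = -\tfrac12(\theta + \ii\,\omega)$, whence $\rho^2 = \tfrac14(\theta^2 - \omega^2) + \tfrac{\ii}{2}\,\theta\omega$. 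Since $\gamma$ is an integral curve of the unit field $\kk$, along $\gamma$ the operator $\kk[\,\cdot\,]$ is just $\tfrac{d}{dt}(\,\cdot\circ\gamma)$ for the arclength parameter $t$. Separating the displayed identity into real and imaginary parts along $\gamma$ therefore yields the two real scalar ODEs
$$
\omega' \ =\ -\,\theta\,\omega
\qquad\text{and}\qquad
\theta' \ =\ -\tfrac12\theta^2 + \tfrac12\omega^2 - 2|\sigma|^2 - {\rm Ric}(\kk,\kk).
$$

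I would first handle the propagation statement. The first ODE above is linear and homogeneous in $\omega\circ\gamma$, so by uniqueness of solutions of linear ODEs, if $\omega$ vanishes at the one point $p \in \gamma$ then it vanishes identically along $\gamma$; the reverse implication is trivial since $p$ lies on $\gamma$. As $\kk$ being hypersurface-orthogonal at a point means precisely that $\omega$ vanishes there (see \eqref{eqn:rotation}), this is exactly the first assertion, and notably it requires no curvature hypothesis.

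For the second assertion, assume $\kk$ is hypersurface-orthogonal along all of $\gamma$, so $\omega \equiv 0$ there and the second ODE becomes the Raychaudhuri equation
$$
\theta'\ =\ -\tfrac12\theta^2 - 2|\sigma|^2 - {\rm Ric}(\kk,\kk)\ \leq\ -\tfrac12\theta^2,
$$
using $|\sigma|^2 \geq 0$ together with the hypothesis ${\rm Ric}(\kk,\kk)\circ\gamma \geq 0$. Suppose, for contradiction, that $\theta(t_0) < 0$ for some $t_0$. Then $\theta' \leq 0$ forces $\theta$ to stay negative for $t \geq t_0$, so $g := 1/\theta$ is well defined and negative on $[t_0,\infty)$ with $g' = -\theta'/\theta^2 \geq \tfrac12$; hence $g(t) \geq g(t_0) + \tfrac12(t - t_0)$, which drives $g$ up to $0$ — equivalently $\theta$ down to $-\infty$ — no later than the finite time $t_0 - 2g(t_0)$. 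This contradicts the smoothness, hence finiteness, of $\theta = {\rm div}\,\kk \circ \gamma$ on the complete geodesic $\gamma$. Therefore $\theta \geq 0$ everywhere along $\gamma$, i.e., $({\rm div}\,\kk)\circ\gamma \geq 0$.

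The whole argument is computationally light; the only points that deserve a moment's attention are the observation that the imaginary part of \eqref{eqn:S1} decouples into a homogeneous linear equation for $\omega$ (so that hypersurface-orthogonality simply propagates, with no sign condition), and the routine Raychaudhuri blow-up estimate together with the appeal to completeness that rules it out. I do not anticipate a genuine obstacle here — consistent with the paper's stated philosophy that such results can be "read off" the Sachs equations.
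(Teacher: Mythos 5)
Your proposal is correct and follows essentially the same route as the paper: both take the real and imaginary parts of \eqref{eqn:S1} with $\kappa=0$ to obtain the Raychaudhuri equation \eqref{eqn:ray} and the linear propagation equation \eqref{eqn:zero} for $\omega$, then use uniqueness for the latter and the finite-time blow-up of ${\rm div}\,\kk$ for the former. The only difference is that you spell out the blow-up estimate explicitly via $g = 1/\theta$, a detail the paper leaves implicit.
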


\begin{proof}
The real and imaginary parts of \eqref{eqn:S1} in Lemma \ref{lemma:sachs1} are, respectively,
\beqa
\kk[\text{div}\,\kk] &=& \frac{\omega^2}{2} - 2|\sigma|^2 - \frac{(\text{div}\,\kk)^2}{2} - \text{Ric}(\kk,\kk),\label{eqn:ray}\\
\kk[\omega] &=& -(\text{div}\,\kk)\,\omega.\label{eqn:zero}
\eeqa
These are identical to their counterparts for a null geodesic flow in four-dimensional Lorentzian geometry (see \cite{sachs1961} and \cite[Prop. 5.7.2, p. 330]{o1995}); in particular, \eqref{eqn:ray} is the analogue of the \emph{Raychaudhuri equation} in general relativity \cite{ray1955}.  Hence it has the same application: if ${\rm Ric}(\kk,\kk) \circ \gamma \geq 0$ and $\kk$ is hypersurface-orthogonal along $\gamma$ (recall that by \eqref{eqn:rotation}, this is the case if and only if $\omega = 0$), then \eqref{eqn:ray} yields the inequality
$$
\kk[\text{div}\,\kk]\ \leq\ - \frac{(\text{div}\,\kk)^2}{2}
$$
along $\gamma$.  If $\text{div}\,\kk|_q < 0$ at any point $q$ along $\gamma$, then this inequality implies that $(\text{div}\,\kk)\circ \gamma \to -\infty$ at a finite value of the geodesic parameter, provided that $\gamma$ is defined at that value.  Since we are assuming that $\gamma$ is complete, this contradicts the smoothness of the function $(\text{div}\,\kk) \circ \gamma$.  Hence we must have $(\text{div}\,\kk) \circ \gamma \geq 0$.  Finally, \eqref{eqn:zero} implies that along any geodesic integral curve of $\kk$, either $\omega$ vanishes identically or else it is never zero.
\end{proof}

Bearing Lemma \ref{lemma:Killing} in mind, Proposition \ref{prop:ray} implies that if $\text{Ric}(\kk,\kk) < 0$, then $\kk$ cannot be a (unit length) Killing vector field\,---\,a fact which, as we mentioned in Section \ref{sec:review} above, is true in all dimensions (of course, there is also the classical result of Bochner \cite{Bochner}, namely, that \emph{compact} Riemannian manifolds with $\text{Ric} < 0$ have no nontrivial Killing vector fields).  More interesting for us, therefore, is the case of $\text{Ric}(\kk,\kk) > 0$.  When $M$ is compact, we can recover a result discovered in  \cite{hp13}.

\begin{cor}
\label{cor:hp}
Let $M$ be a closed Riemannian 3-manifold and $\kk$ a smooth, globally defined unit vector field with geodesic flow.  If \emph{$\text{Ric}(\kk,\kk) > 0$}, then $\kk$ is not hypersurface-orthogonal at any point of $M$.
\end{cor}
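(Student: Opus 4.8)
The plan is to argue by contradiction using Proposition \ref{prop:ray} together with a compactness argument. Suppose $M$ is closed, $\kk$ is a globally defined unit vector field with geodesic flow, $\text{Ric}(\kk,\kk) > 0$ everywhere, and — contrary to the claim — $\kk$ is hypersurface-orthogonal at some point $p$. Then $\kk$ is hypersurface-orthogonal along the entire integral curve $\gamma$ through $p$ by the first part of Proposition \ref{prop:ray}; moreover, since $M$ is closed, $\gamma$ is complete. Because $\text{Ric}(\kk,\kk) \circ \gamma > 0$ (in particular $\geq 0$), the second part of Proposition \ref{prop:ray} gives $(\text{div}\,\kk)\circ\gamma \geq 0$. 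So far this only rules out \emph{negative} divergence; the real work is to extract a contradiction from the strict positivity of the Ricci term.

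The key step is to return to the Raychaudhuri equation \eqref{eqn:ray}, which along $\gamma$ (where $\omega = 0$) reads
\beqa
\kk[\text{div}\,\kk]\ =\ -2|\sigma|^2\, -\, \frac{(\text{div}\,\kk)^2}{2}\, -\, \text{Ric}(\kk,\kk)\ <\ 0,\nonumber
\eeqa
since the Ricci term is strictly positive and the other two terms on the right are $\leq 0$. Thus $(\text{div}\,\kk)\circ\gamma$ is strictly decreasing as a function of the geodesic parameter $t$. But we have just shown it is bounded below by $0$. A strictly decreasing function that is bounded below must have its derivative tend to $0$ along a suitable sequence $t_n \to \infty$; however, a cleaner route uses compactness directly: since $M$ is closed, $f := \text{div}\,\kk$ is a smooth function on a compact manifold, so it attains a maximum at some point $q \in M$. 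Restrict attention to the integral curve $\gamma_q$ through $q$. Then $g(t) := f(\gamma_q(t))$ has an interior maximum at $t=0$, forcing $g'(0) = \kk[f]|_q = 0$. But \eqref{eqn:ray} evaluated at $q$ — noting that $q$ lies on some integral curve, and we must check hypersurface-orthogonality holds there, which it need \emph{not} a priori — gives a contradiction only if $\omega(q)=0$.

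Here is the main obstacle, and how I would handle it: hypersurface-orthogonality is assumed only along $\gamma$, not globally, so I cannot freely evaluate \eqref{eqn:ray} with $\omega=0$ at the maximum point $q$ of $\text{div}\,\kk$. The fix is to localize the maximum argument to $\gamma$ itself. Along $\gamma$ we have $\omega \equiv 0$ and $f\circ\gamma \geq 0$; if $f\circ\gamma$ is bounded and strictly decreasing it cannot stay nonnegative as $t\to+\infty$ unless it is constant — but if it were constant then $\kk[f]\circ\gamma \equiv 0$, which by the displayed strict inequality is impossible (the right side is strictly negative). Alternatively, and most robustly: from $\kk[f] \leq -\text{Ric}(\kk,\kk)\circ\gamma \leq -c < 0$ (using compactness of $M$ to get a uniform lower bound $c>0$ on $\text{Ric}(\kk,\kk)$), integrating gives $f(\gamma(t)) \leq f(\gamma(0)) - ct \to -\infty$, directly contradicting $f\circ\gamma \geq 0$. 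This uniform-bound-plus-integration step is the crux and replaces the finite-parameter blow-up argument of Proposition \ref{prop:ray} with a linear decay estimate that exploits strict positivity. The completeness of $\gamma$ (from compactness of $M$) is what makes the contradiction go through.
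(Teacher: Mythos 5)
Your final argument is correct and is essentially the paper's own proof: use compactness to get a uniform lower bound $c>0$ on $\text{Ric}(\kk,\kk)$, propagate $\omega=0$ along the whole integral curve via Proposition \ref{prop:ray}, deduce $\kk[\text{div}\,\kk]\le -c$ from \eqref{eqn:ray}, and integrate to contradict $(\text{div}\,\kk)\circ\gamma\ge 0$. The detour through a global maximum of $\text{div}\,\kk$ is rightly abandoned (since $\omega$ need not vanish at that maximum point), and the argument you settle on is the one the paper gives.
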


\begin{proof}
Since $M$ is compact, the flow of $\kk$ is complete and $\text{Ric}(\kk,\kk) \geq b$ for some constant $b > 0$.  Now consider any geodesic integral curve $\gamma$ of $\kk$ and suppose that $\kk$ is hypersurface-orthogonal along $\gamma$, hence that $\omega = 0$ everywhere along $\gamma$.   Then along this geodesic \eqref{eqn:ray} reduces to
$$
-\kk[\text{div}\,\kk]\ =\ 2|\sigma|^2 + \frac{(\text{div}\,\kk)^2}{2} + \text{Ric}(\kk,\kk)\ \geq\ b,
$$
hence $\kk[\text{div}\,\kk] \circ \gamma \leq -b$.  But this contradicts the result arrived at in Proposition \ref{prop:ray}, namely, that $({\rm div}\,\kk) \circ \gamma \geq 0$.  Hence $\omega$ cannot be identically zero along $\gamma$, which by Proposition \ref{prop:ray} is tantamount to it being everywhere nonzero along $\gamma$.  As $\gamma$ was chosen arbitrarily, the proof is complete.
\end{proof}

The result in \cite{hp13} appears in a slightly different guise.  Specifically, it was shown in \cite{hp13} that if a closed and orientable Riemannian 3-manifold has a $\kk$ satisfying the conditions in Corollary \ref{cor:hp}, then $\ip{\kk}{\cdot}$ is a contact form and $\kk$ its Reeb vector field.  But since $\ip{\kk}{\cdot}$ is a contact form at $p \in M$ (in other words, $d\ip{\kk}{\cdot}|_{\kk_p^{\perp}}$ is nondegenerate) if and only if $\ip{\kk}{[\xx,\yy]}|_p \neq 0$, which, by Frobenius's theorem, is the case if and only if $\omega_p \neq 0$, it follows that the result in \cite{hp13} and Corollary \ref{cor:hp} are equivalent.  (Passing to the smooth orientation covering of $M$, the result in \cite{hp13} of course also holds for non-orientable $M$.)  As also discussed in \cite{hp13}, a simple application of Corollary \ref{cor:hp} is to the round 3-sphere: in \cite{gluck2001} it was shown that all divergence-free great circle flows globally defined on the round 3-sphere must be tangent to a Hopf fibration.  By Corollary \ref{cor:hp}, those that are unit speed geodesics can never be hypersurface-orthogonal (not even at just one point).  As this does not require compactness, we mention it as a corollary.

\begin{cor}
\label{cor:hopf}
Let $M$ be a Riemannian 3-manifold and $\kk$ a smooth unit vector field defined in an open subset of $M$ whose flow is geodesic and divergence-free.  If ${\rm Ric}(\kk,\kk) > 0$, then $\kk$ is not hypersurface-orthogonal at any point in its domain.
\end{cor}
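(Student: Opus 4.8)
The plan is to read the statement straight off the Raychaudhuri equation \eqref{eqn:ray}, exactly as in the proof of Corollary \ref{cor:hp}, but now using the divergence-free hypothesis to bypass the completeness argument entirely. First I would fix a point $p$ in the domain of $\kk$ and a complex triad $\{\kk,\mm,\mb\}$ around it; since the flow is geodesic we have $\kappa = 0$, so that the real part of \eqref{eqn:S1} is precisely the Raychaudhuri equation
$$
\kk[\text{div}\,\kk]\ =\ \frac{\omega^2}{2}\, -\, 2|\sigma|^2\, -\, \frac{(\text{div}\,\kk)^2}{2}\, -\, \text{Ric}(\kk,\kk)
$$
displayed in \eqref{eqn:ray}.

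Next I would feed in the divergence-free hypothesis. Because $\text{div}\,\kk$ vanishes identically on the domain of $\kk$, so does each of its directional derivatives; in particular $\kk[\text{div}\,\kk] \equiv 0$. Substituting $\text{div}\,\kk = 0$ and $\kk[\text{div}\,\kk] = 0$ into the displayed identity and solving for $\omega^2$ gives
$$
\omega^2\ =\ 4|\sigma|^2\, +\, 2\,\text{Ric}(\kk,\kk)
$$
at every point of the domain. Since $|\sigma|^2 \geq 0$ and $\text{Ric}(\kk,\kk) > 0$ by hypothesis, the right-hand side is strictly positive, so $\omega$ vanishes nowhere. By the Frobenius criterion \eqref{eqn:rotation}, $\kk$ is hypersurface-orthogonal at a given point exactly when $\omega$ vanishes there; as this happens at no point, $\kk$ is hypersurface-orthogonal at no point of its domain. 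Note that this argument is entirely pointwise, which is why no completeness or compactness hypothesis is needed.

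I do not expect a genuine obstacle here: all the work is already contained in the derivation of \eqref{eqn:ray}, and the divergence-free assumption is exactly what simultaneously kills the left-hand side and the $(\text{div}\,\kk)^2/2$ term, leaving the clean pointwise identity above. The only point to handle with a word of care is that, as remarked after \eqref{eqn:rotation}, $\omega$ is defined only up to sign; this is harmless, since the conclusion concerns the zero set of $\omega$, equivalently of $\omega^2$ (the latter being triad-independent, as the determinant of the skew-symmetric part of $\nabla$). One could even avoid that remark by running the same computation with the spin coefficient $\rho$: under the divergence-free hypothesis $-2\rho = \ii\,\omega$ is purely imaginary, so $\rho^2 = -\tfrac{1}{4}\omega^2$ is real, and taking the real part of \eqref{eqn:S1} with $\kappa = 0$ gives $\tfrac{1}{4}\omega^2 = |\sigma|^2 + \tfrac{1}{2}\,\text{Ric}(\kk,\kk) > 0$, the same conclusion.
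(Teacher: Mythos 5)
Your proof is correct and is essentially the paper's own argument: the paper likewise sets $\text{div}\,\kk = 0$ in \eqref{eqn:ray} to obtain $\text{Ric}(\kk,\kk) = \tfrac{\omega^2}{2} - 2|\sigma|^2$, which is just a rearrangement of your identity $\omega^2 = 4|\sigma|^2 + 2\,\text{Ric}(\kk,\kk)$, and concludes pointwise that $\omega$ never vanishes. Your added remarks on the sign-ambiguity of $\omega$ and the equivalent computation via $\rho$ are fine but not needed.
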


\begin{proof}
When $\text{div}\,\kk = 0$, \eqref{eqn:ray} reduces to
$$
\text{Ric}(\kk,\kk)\ =\ \frac{\omega^2}{2}\,-\,2|\sigma|^2,
$$
from which the statement immediately derives.
\end{proof}

Moving on, note that thus far we have not made use of the differential Bianchi identities \eqref{eqn:bid} and \eqref{eqn:bid2}.  As these are complicated equations, consider for the moment the case when the Riemannian 3-manifold has constant sectional curvature $K$.  Given a complex triad $\{\kk,\mm,\mb\}$ on such a manifold, its Ricci tensor satisfies
$$
\text{Ric}(\mm,\mm)\ =\ \text{Ric}(\kk,\mm)\ =\ 0\hspace{.1in},\hspace{.1in}\text{Ric}(\mm,\mb)\ =\ \text{Ric}(\kk,\kk)\ =\ 2K.
$$
(Recall that $\text{Ric} = 2K\langle\,,\rangle$.)  However, it is easy to verify that neither of the differential Bianchi identities \eqref{eqn:bid} and \eqref{eqn:bid2} gives any information regarding the flow of $\kk$ on such a manifold\,---\,as far as the differential Bianchi identities are concerned, constant sectional curvature is simply too stringent a condition.  But as we now show, \eqref{eqn:bid} and \eqref{eqn:bid2} certainly do give useful information about the \emph{scalar} curvature of the manifold.

\section{The Newman-Penrose formalism and scalar curvature}
\label{sec:GS}

The result we will prove in this section is akin to the \emph{Goldberg-Sachs theorem} from four-dimensional Lorentzian geometry (see \cite{gsrepub}, \cite{newpen62}, and \cite[Theorem 5.10.1, p. 345]{o1995}).  The Goldberg-Sachs theorem states that for any Ricci-flat four-dimensional Lorentzian manifold $M$, the flow of a smooth null vector field $\kk$ is geodesic and shear-free if and only if the Weyl tensor $C$ of $M$ satisfies at least one of the following conditions: $C(\kk,\xx,\kk,\cdot) = 0, C(\kk,\vv{x},\cdot,\cdot) = 0$, or $C(\kk,\cdot,\cdot,\cdot) = 0$, for all $\vv{x} \perp \kk$.  Such vector fields $\kk$ are known as \emph{repeated-principal} null vectors (see \cite[Def. 5.5.2, p. 318, Prop. 5.5.5, p. 321]{o1995}).  We would like to see if a relation such as this holds on Riemannian 3-manifolds (with the full Riemann tensor replacing the Weyl tensor, which vanishes in three dimensions).  Certainly we are motivated to begin with the following definition.

\begin{defn}
\label{defn:principal}
Let $M$ be a non-flat Riemannian 3-manifold with Riemann 4-tensor $R$.  A smooth unit vector field $\kk$ defined in an open subset of $M$ is {\rm 2-principal} if $R(\kk,\cdot,\cdot,\cdot) = 0$.
\end{defn}

We have already seen examples of 2-principal vector fields: any hypersurface-orthogonal unit Killing vector field $\kk$ is 2-principal, because such a $\kk$ is determined by the condition $\kappa = \rho = \sigma = 0$, from which, using the Sachs equations \eqref{eqn:S1}-\eqref{eqn:S3} and $(*)$, it follows easily that $R(\kk,\cdot,\cdot,\cdot) = 0$.  (Alternatively, inserting $\kappa = \rho = \sigma = 0$ into \eqref{eqn:S1} yields $\text{Ric}(\kk,\kk) = 0$; but any constant length Killing vector field $\kk$ satisfying $\text{Ric}(\kk,\kk) = 0$ must have vanishing covariant differential, $\nabla\kk = 0$ (see, e.g., \cite{BN}), and this easily implies that it must be 2-principal.)  Evidently, the condition of being 2-principal is quite stringent.  For example, on a Riemannian 3-manifold with strictly positive or strictly negative Ricci curvature, or with constant nonzero sectional curvature, there can be no 2-principal vector fields.  But as we now show, when 2-principal vector fields are present they constrain the scalar curvature of the manifold.  The differential Bianchi identities \eqref{eqn:bid} and \eqref{eqn:bid2} play a crucial here.

\begin{prop}
\label{prop:GS0}
Let $\kk$ be a smooth unit vector field defined in an open subset of a Riemannian 3-manifold.  If $\kk$ is 2-principal, $R(\kk,\cdot,\cdot,\cdot) = 0$, then the scalar curvature $S$ satisfies
\beqa
\label{eqn:S}
\kk[S]\ =\ -({\rm div}\,\kk)\,S.
\eeqa
Moreover, if $S$ is nonzero at a point $p$ in the domain of $\kk$, then the integral curve of $\kk$ through $p$ is a geodesic.
\end{prop}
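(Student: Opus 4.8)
The plan is to extract the evolution equation \eqref{eqn:S} directly from the two differential Bianchi identities of Lemma \ref{lemma:bid}, after imposing the 2-principal condition. Being 2-principal, $R(\kk,\cdot,\cdot,\cdot)=0$, forces all curvature components that contain a $\kk$-slot to vanish; in particular, by the identities $(*)$ we immediately get $\mathrm{Ric}(\kk,\kk)=\mathrm{Ric}(\kk,\mm)=\mathrm{Ric}(\mm,\mm)=0$, while the only surviving Ricci component is $\mathrm{Ric}(\mm,\mb)$. Since in three dimensions the scalar curvature is $S=\sum g^{ab}\mathrm{Ric}(a,b)=\mathrm{Ric}(\kk,\kk)+2\,\mathrm{Ric}(\mm,\mb)$, we conclude $S=2\,\mathrm{Ric}(\mm,\mb)$, so controlling $\kk[S]$ amounts to controlling $\kk[\mathrm{Ric}(\mm,\mb)]$.

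The key step is to feed these vanishings into \eqref{eqn:bid2}. On the left-hand side, the terms $\mm[\mathrm{Ric}(\kk,\mb)]$, $\mb[\mathrm{Ric}(\kk,\mm)]$, and $\kk[\mathrm{Ric}(\kk,\kk)]$ all drop out (their arguments are identically zero, being conjugates or the same as the vanishing components above), leaving just $-\kk[\mathrm{Ric}(\mm,\mb)]$. On the right-hand side, every term except the first is killed: $\bar\sigma\,\mathrm{Ric}(\mm,\mm)$, $\sigma\,\mathrm{Ric}(\mb,\mb)$, and the two $\mathrm{Ric}(\kk,\mm)$, $\mathrm{Ric}(\kk,\mb)$ terms all vanish, and in the first term $\mathrm{Ric}(\kk,\kk)=0$, so what remains is $-(\rho+\bar\rho)\,\mathrm{Ric}(\mm,\mb)$. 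Recalling from Section \ref{sec:NP} that $-2\rho=\mathrm{div}\,\kk+\ii\,\omega$, we have $\rho+\bar\rho=-\mathrm{div}\,\kk$, so \eqref{eqn:bid2} collapses to $-\kk[\mathrm{Ric}(\mm,\mb)]=(\mathrm{div}\,\kk)\,\mathrm{Ric}(\mm,\mb)$; multiplying by $2$ and using $S=2\,\mathrm{Ric}(\mm,\mb)$ gives exactly $\kk[S]=-(\mathrm{div}\,\kk)\,S$.

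For the second assertion, I would run the same substitution on \eqref{eqn:bid}. With $\mathrm{Ric}(\kk,\mm)$, $\mathrm{Ric}(\kk,\kk)$, $\mathrm{Ric}(\mm,\mm)$ all zero, the left-hand side vanishes entirely, and on the right-hand side only the $-\kappa\,\mathrm{Ric}(\mm,\mb)$ term survives (the $\mathrm{Ric}(\kk,\mb)$ term also dies, being the conjugate of $\mathrm{Ric}(\kk,\mm)$). Hence $0=-\kappa\,\mathrm{Ric}(\mm,\mb)=-\tfrac12\kappa S$, so $\kappa S=0$ throughout the domain. If $S(p)\neq 0$, then by continuity $S\neq 0$ on a neighborhood of $p$, forcing $\kappa=0$ there; and the set where $S\neq 0$ along the integral curve through $p$ is open, while \eqref{eqn:S} (a linear ODE for $S$ along the curve, $\kk[S]=-(\mathrm{div}\,\kk)S$) shows $S$ cannot return to zero once nonzero — so $S\neq 0$ on the entire integral curve, hence $\kappa\equiv 0$ there, i.e. the curve is a geodesic by the remark following Definition \ref{def:spin}.

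I do not anticipate a genuine obstacle here; the content is entirely in the bookkeeping of which terms in \eqref{eqn:bid} and \eqref{eqn:bid2} survive the 2-principal reduction. The one point requiring a little care is the propagation argument for the second statement: one must confirm that $S\neq 0$ persists along the whole integral curve rather than only near $p$, which follows because \eqref{eqn:S} is a homogeneous linear ODE for $S\circ\gamma$ and so its zero set is either everything or nothing — and the latter is excluded by $S(p)\neq 0$.
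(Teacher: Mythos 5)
Your proposal is correct and follows essentially the same route as the paper: substitute the 2-principal vanishings (via $(*)$) into the two differential Bianchi identities \eqref{eqn:bid2} and \eqref{eqn:bid} to obtain, respectively, the evolution equation \eqref{eqn:S} and the relation $\kappa\,S=0$, then propagate $S\neq 0$ along the integral curve using the homogeneous linear ODE to conclude $\kappa\equiv 0$ there. The bookkeeping of surviving terms and the sign conventions ($\rho+\bar\rho=-\mathrm{div}\,\kk$, $S=2\,\mathrm{Ric}(\mm,\mb)$) all check out.
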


\begin{proof}
Let $\{\kk,\mm,\mb\}$ be a complex triad.  Because $\kk$ is 2-principal,
$$
\text{Ric}(\mm,\mm)\ =\ \text{Ric}(\kk,\kk)\ =\ \text{Ric}(\kk,\mm)\ =\ 0\hspace{.1in},\hspace{.1in}\text{Ric}(\mm,\mb) = \frac{S}{2}\nonumber
$$
(recall $(*)$).  Inserting these into the second differential Bianchi identity \eqref{eqn:bid2} yields
$$
\kk[S]\ =\ (\rho + \bar{\rho})\,S.
$$
As $\rho + \bar{\rho} = -\text{div}\,\kk$, this is \eqref{eqn:S}.  Similarly, the first differential Bianchi identity \eqref{eqn:bid} yields
\beqa
\label{eqn:Sgeod}
\kappa\,S\ =\ 0.
\eeqa
As with \eqref{eqn:zero} in Proposition \ref{prop:ray}, the evolution equation \eqref{eqn:S} implies that if $S$ is nonzero at a point $p$ in the domain of $\kk$, then it is nonzero along the integral curve of $\kk$ through $p$.  Thus \eqref{eqn:Sgeod} dictates that $\kappa = 0$ along this curve, hence it must be a geodesic.
\end{proof}

There is one further property of 2-principal vector fields that we report here.  Inserting $\kappa = \rho + \bar{\rho} = \text{Ric}(\kk,\kk) = 0$ into \eqref{eqn:S1}, we obtain
\beqa
\label{eqn:0det}
|\sigma|^2\ =\ \frac{\omega^2}{4}\cdot
\eeqa
As the determinant of the matrix \eqref{eqn:matrix} is $\omega^2/4 + (\text{div}\,\kk)^2/4 - |\sigma|^2$, it follows that the linear map $\vv{v} \in \kk^{\perp} \mapsto \cd{\vv{v}}{\kk}$ has zero determinant everywhere (it also follows, via \eqref{eqn:S1} and \eqref{eqn:S2}, that $\kk[\omega] = \kk[\sigma] = 0$).  Armed with \eqref{eqn:0det}, we then have the following variant of the Goldberg-Sachs theorem, with which we close this paper.

\begin{cor}
\label{cor:GS0}
Let $\kk$ be a smooth, hypersurface-orthogonal unit vector field in an open subset of a Riemannian 3-manifold with constant nonzero scalar curvature.  Then $\kk$ is 2-principal, $R(\kk,\cdot,\cdot,\cdot) = 0$, if and only if $\kk$ is a Killing vector field.
\end{cor}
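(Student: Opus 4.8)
The plan is to prove both directions, with the forward direction ($\kk$ 2-principal $\Rightarrow$ $\kk$ Killing) being the substantive one. The backward direction is essentially already recorded in the excerpt: a hypersurface-orthogonal unit Killing vector field is 2-principal (this was noted right after Definition \ref{defn:principal}), so I would simply cite that remark, adding that the hypothesis of constant scalar curvature is not even needed for that implication.

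For the forward direction, I would start from the structural consequences of $\kk$ being 2-principal that are already assembled in the excerpt. First, hypersurface-orthogonality gives $\omega = 0$, so equation \eqref{eqn:0det} forces $|\sigma|^2 = \omega^2/4 = 0$, hence $\sigma = 0$: the flow is automatically shear-free. Second, Proposition \ref{prop:GS0} applied with $S$ a nonzero constant gives, via \eqref{eqn:S}, that $(\text{div}\,\kk)\,S = \kk[S] = 0$, so $\text{div}\,\kk = 0$; and the same proposition's equation \eqref{eqn:Sgeod}, $\kappa\,S = 0$, forces $\kappa = 0$, i.e. the flow is geodesic. So from $\kk$ 2-principal, hypersurface-orthogonal, and $S$ a nonzero constant, I extract $\kappa = \sigma = 0$, $\omega = 0$, and $\text{div}\,\kk = 0$; equivalently all three spin coefficients $\kappa, \rho, \sigma$ vanish (since $-2\rho = \text{div}\,\kk + \ii\omega = 0$). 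At that point the flow of $\kk$ is geodesic, divergence-free, and shear-free, so Lemma \ref{lemma:Killing} immediately yields that $\kk$ is a Killing vector field.

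The main obstacle is really just making sure the logical extraction of $\text{div}\,\kk = 0$ and $\kappa = 0$ is clean given that $S$ is only assumed \emph{nonzero}, not sign-definite, and constant: constancy gives $\kk[S] = 0$ on the nose, so \eqref{eqn:S} reads $0 = (\text{div}\,\kk)\,S$ pointwise, and since $S \neq 0$ everywhere (a nonzero constant), $\text{div}\,\kk \equiv 0$; similarly \eqref{eqn:Sgeod} gives $\kappa \equiv 0$ directly without needing the evolution argument of Proposition \ref{prop:GS0} (that argument was only there to propagate nonvanishing of $S$ when $S$ was not assumed constant). I would phrase this carefully. Everything else is a matter of invoking \eqref{eqn:0det}, Lemma \ref{lemma:Killing}, and the remark after Definition \ref{defn:principal}; no new computation is required. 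If I wanted to be thorough I might also note in passing that this forces $\text{Ric}(\kk,\kk) = 0$ and, combined with the Killing property, $\nabla\kk = 0$, so such manifolds are rather special — but that is a remark, not part of the proof.
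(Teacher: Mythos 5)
Your proposal is correct and follows essentially the same route as the paper: the backward direction cites the remark after Definition \ref{defn:principal}, and the forward direction combines Proposition \ref{prop:GS0} (which, for constant nonzero $S$, gives $\kappa = 0$ and $\mathrm{div}\,\kk = 0$) with \eqref{eqn:0det} and $\omega = 0$ to get $\sigma = 0$, then invokes Lemma \ref{lemma:Killing}. The only nuance worth noting is that \eqref{eqn:0det} is derived in the paper by inserting $\kappa = \rho + \bar\rho = 0$ into \eqref{eqn:S1}, so logically you should extract $\kappa = 0$ and $\mathrm{div}\,\kk = 0$ from Proposition \ref{prop:GS0} \emph{before} appealing to \eqref{eqn:0det}; since you establish both facts anyway, this is purely a matter of ordering.
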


\begin{proof}
As discussed above, if $\kk$ is a hypersurface-orthogonal unit Killing vector field, then $\kk$ is 2-principal.  The converse follows from Proposition \ref{prop:GS0} and \eqref{eqn:0det}.
\end{proof}

\bibliographystyle{siam}
\bibliography{NP-3Riem}
\end{document}